\newtheorem{thm}{Theorem}[section]
\newtheorem{lem}[thm]{Lemma}
\newtheorem{conj}[thm]{Conjecture}
\newtheorem{ques}[thm]{Question}
\theoremstyle{definition}
\title{On a problem of Erd\H{o}s and Ingham}
\author{Fredy Yip\thanks{Trinity College, University of Cambridge, United Kingdom. Email: \textbf{fy276@cam.ac.uk}.}}
\date{}
\begin{document}

\maketitle

\begin{abstract}
    We give a short and elementary argument answering a question of Erd\H{o}s and Ingham negatively. 

    Erd\H{o}s and Ingham showed that a Tauberian estimate they considered was equivalent to the non-vanishing of $1+\sum_{k}a_k^{-1-it}$ for any real number $t$ and any sequence $1<a_1<a_2<\cdots$ of positive integers such that $\sum_k a_k^{-1}<\infty$. We disprove this statement. In fact, we show that for any complex number $\lambda$ and any non-zero real number $t$, there exists a sequence $1<a_1<a_2<\cdots$ of positive integers such that $\sum_k a_k^{-1}<\infty$ and $\sum_k a_k^{-1-it} = \lambda$. 
\end{abstract}

\section{Introduction}

In their 1964 paper on Tauberian estimates, Erd\H{o}s and Ingham~\cite{erdos1964arithmetical} asked for the possibility of zeros of the following zeta-like series on the line $\operatorname{Re} = 1$, where the series is convergent. 

\begin{ques}[Erd\H{o}s and Ingham~\cite{erdos1964arithmetical}] \label{ques}
    Let $1<a_1<a_2<\cdots$ be a (finite or infinite) sequence of integers such that $\sum_k a_k^{-1}<\infty$. Is it true that, for every $t\in \mathbb{R}$,
    \begin{equation*}
        1+\sum_{k}\frac{1}{a_k^{1+it}}\neq 0?
    \end{equation*}
\end{ques}

This problem is listed as problem 967 on Bloom's website~\cite{Bloom2025Erdos967} of problems of Erd\H{o}s. Erd\H{o}s and Ingham were motivated towards Question~\ref{ques} from showing its equivalence to the following Tauberian estimate. 

\begin{thm}[Erd\H{o}s and Ingham~\cite{erdos1964arithmetical}, Theorem 4]
    Let $1<a_1<a_2<\cdots $ be a (finite or infinite) sequence of integers such that $\sum_k a_k^{-1}<\infty$. The following statements are equivalent: 
    \begin{enumerate}
        \item $1+\sum_{k}a_k^{-1-it}\neq 0$ for all $t\in \mathbb{R}$. 
        \item For any non-decreasing $f:\mathbb{R}^{\geq 0}\rightarrow \mathbb{R}^{\geq 0}$ which vanishes on $[0, 1)$, 
        \begin{equation*}
            f(x) + \sum_k f(x/a_k) \sim
            \left(1 + \sum_k a_k^{-1}\right)x
        \end{equation*}
        as $x\rightarrow \infty$ implies that $f(x)\sim x$ as $x\rightarrow \infty$. 
    \end{enumerate}
\end{thm}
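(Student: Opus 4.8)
I would prove the two implications separately after a common reduction to an additive convolution problem. Put $\alpha=1+\sum_k a_k^{-1}$ and, for $\operatorname{Re}(s)\ge1$, $A(s)=1+\sum_k a_k^{-s}$; since $\sum_k a_k^{-1}<\infty$, this series converges absolutely and uniformly on $\operatorname{Re}(s)\ge1$, so $A$ is continuous there with $A(1)=\alpha>0$. Given $f$ as in (2), write $g(x)=f(x)+\sum_k f(x/a_k)=\sum_{a\in\{1,a_1,a_2,\dots\}}f(x/a)$ and substitute $x=e^{u}$: with $F(u)=e^{-u}f(e^{u})$ and $G(u)=e^{-u}g(e^{u})$, multiplying through by $e^{-u}$ gives the convolution identity
\begin{equation*}
G=F*\mu,\qquad \mu:=\delta_{0}+\sum_k a_k^{-1}\,\delta_{\ln a_k},
\end{equation*}
where $\mu$ is a positive finite measure of total mass $\alpha$ whose Fourier transform (normalised by $\widehat\nu(\xi)=\int e^{-i\xi u}\,d\nu(u)$) is $\widehat\mu(\xi)=\sum_{a}a^{-1-i\xi}=A(1+i\xi)$. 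Under this dictionary ``$g(x)\sim\alpha x$'' becomes ``$G(u)\to\alpha$'', ``$f(x)\sim x$'' becomes ``$F(u)\to1$'', and the hypotheses on $f$ translate into: $F\ge0$, $F$ vanishes on $(-\infty,0)$, $F$ is bounded (as $0\le f\le g$ and $g(x)=O(x)$ from monotonicity together with $g(x)\sim\alpha x$), and $F$ is \emph{slowly decreasing}---$F(u+h)\ge e^{-h}F(u)$ for $h\ge0$, hence $F(v)\ge F(u)-C(v-u)$ whenever $u\le v$, with $C=\lVert F\rVert_{\infty}$. So it suffices to show that ``$A(1+i\xi)\ne0$ for all $\xi\in\mathbb{R}$'' is equivalent to the statement ``$F*\mu\to\alpha$ forces $F\to1$, for every $F$ of the above type''.

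\textbf{From (2) to (1).} I would establish the contrapositive by an explicit oscillating example. Suppose $A(1+it_{0})=0$ for some real $t_{0}$; then $t_{0}\ne0$ since $A(1)=\alpha\ge1$ (the empty-sequence case being trivial). Fix $c\in\bigl(0,(1+t_{0}^{2})^{-1/2}\bigr)$ and set $f(x)=x\bigl(1+c\cos(t_{0}\ln x)\bigr)$ for $x\ge1$ and $f(x)=0$ otherwise. Then $f\ge0$, $f$ vanishes on $[0,1)$, and $f$ is non-decreasing: its jump at $x=1$ equals $1+c>0$, and for $x>1$, $f'(x)=1+c\cos(t_{0}\ln x)-ct_{0}\sin(t_{0}\ln x)\ge 1-c\sqrt{1+t_{0}^{2}}>0$. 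Expanding $f(x/a)$ for $a\le x$ and summing,
\begin{equation*}
g(x)=\sum_{a\le x}\Bigl(\tfrac{x}{a}+c\operatorname{Re}\bigl((x/a)^{1+it_{0}}\bigr)\Bigr)=x\sum_{a\le x}\tfrac1a+cx\operatorname{Re}\Bigl(x^{it_{0}}\sum_{a\le x}a^{-1-it_{0}}\Bigr).
\end{equation*}
The first term equals $\alpha x-x\sum_{a>x}a^{-1}=\alpha x+o(x)$, since the tail of a convergent series is $o(1)$; the second is $O\bigl(x\lvert\sum_{a\le x}a^{-1-it_{0}}\rvert\bigr)=o(x)$, since $\sum_{a\le x}a^{-1-it_{0}}\to A(1+it_{0})=0$. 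Hence $g(x)\sim\alpha x$, yet $f(x)/x=1+c\cos(t_{0}\ln x)$ oscillates over $[1-c,1+c]$ and does not converge to $1$; this contradicts (2).

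\textbf{From (1) to (2).} Here I would combine a smoothing step with Wiener's Tauberian theorem. Assume $\widehat\mu(\xi)=A(1+i\xi)\ne0$ for all $\xi\in\mathbb{R}$, and let $F$ be as above with $F*\mu\to\alpha$. Take $\varphi(u)=e^{-u^{2}}\in L^{1}(\mathbb{R})$, whose transform $\widehat\varphi(\xi)=\sqrt{\pi}\,e^{-\xi^{2}/4}$ is nowhere zero. Then $K:=\mu*\varphi\in L^{1}(\mathbb{R})$, $\widehat K=\widehat\mu\,\widehat\varphi$ is nowhere zero, and, since $F*\mu$ is bounded and tends to $\alpha$, dominated convergence gives $K*F=\varphi*(F*\mu)\to\alpha\int\varphi=\int K$. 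By Wiener's Tauberian theorem---if $K\in L^{1}$ has a nowhere-vanishing Fourier transform, $h\in L^{\infty}$, and $(K*h)(u)\to c\int K$ as $u\to\infty$, then $(L*h)(u)\to c\int L$ for every $L\in L^{1}$---we obtain (with $c=1$) that $(L*F)(u)\to\int L$ as $u\to\infty$ for every $L\in L^{1}(\mathbb{R})$. Taking $L=\tfrac1h\mathbf{1}_{[0,h]}$ and $L=\tfrac1h\mathbf{1}_{[-h,0]}$ gives $\tfrac1h\int_{u-h}^{u}F\to1$ and $\tfrac1h\int_{u}^{u+h}F\to1$ for each fixed $h>0$. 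Finally, the slow-decrease bound converts these into pointwise control: for $v\in[u-h,u]$ one has $F(v)\le F(u)+Ch$, so $F(u)\ge\tfrac1h\int_{u-h}^{u}F-Ch$ and $\liminf_{u}F(u)\ge1-Ch$; for $v\in[u,u+h]$ one has $F(v)\ge F(u)-Ch$, so $F(u)\le\tfrac1h\int_{u}^{u+h}F+Ch$ and $\limsup_{u}F(u)\le1+Ch$. Letting $h\to0$ yields $F(u)\to1$, that is, $f(x)\sim x$.

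\textbf{Where the difficulty lies.} The substantive direction is $(1)\Rightarrow(2)$. The naive approach---invert the convolution by writing $1/\widehat\mu=\widehat\sigma$ for a bounded measure $\sigma$ and concluding $F=(F*\mu)*\sigma\to\alpha\,\widehat\sigma(0)=1$---works only when $\sum_k a_k^{-1}<1$, where the Neumann series $\sigma=\sum_{j\ge0}(-1)^{j}(\mu-\delta_{0})^{*j}$ converges geometrically; in general the Wiener--Pitt phenomenon blocks it, since a measure with nowhere-vanishing Fourier transform need not be invertible in $M(\mathbb{R})$. That is why one first smooths $\mu$ by $\varphi$ to land in $L^{1}$, where Wiener's theorem applies cleanly---at the cost that Wiener alone returns only the averaged statement, so the monotonicity of $f$, repackaged as the slow-decrease property of $F$, is genuinely needed to upgrade to the pointwise asymptotic. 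Carrying this two-step structure through, together with the tail and boundary estimates in the reductions, is where the real care is required.
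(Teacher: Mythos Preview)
The paper does not actually contain a proof of this theorem: it is quoted as Theorem~4 of Erd\H{o}s--Ingham~\cite{erdos1964arithmetical} purely as background and motivation, and the paper's own work begins with Lemma~\ref{lem} and the proof of Theorem~\ref{main}. So there is nothing in the paper to compare your argument against.

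That said, your proof is correct and is essentially the classical route. The reduction $F(u)=e^{-u}f(e^{u})$, $\mu=\delta_{0}+\sum_k a_k^{-1}\delta_{\ln a_k}$ is exactly right, and both directions are handled cleanly. For $(2)\Rightarrow(1)$ the explicit oscillating $f(x)=x(1+c\cos(t_0\ln x))$ is the standard counterexample; your monotonicity check via $f'\ge 1-c\sqrt{1+t_0^2}$ and the computation showing the cosine contribution to $g$ is $o(x)$ because $\sum_{a\le x}a^{-1-it_0}\to A(1+it_0)=0$ are both sound. For $(1)\Rightarrow(2)$ the smoothing $K=\mu*\varphi$ followed by Wiener's Tauberian theorem, then upgrading the averaged limit to a pointwise one via the slow-decrease inequality $F(v)\ge F(u)-C(v-u)$, is the textbook argument and is carried out correctly. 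Your remark on the Wiener--Pitt obstruction nicely explains why one cannot simply invert $\mu$ in $M(\mathbb{R})$ and must pass through $L^1$.

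One cosmetic point: in the boundedness step you invoke ``$g(x)=O(x)$ from monotonicity together with $g(x)\sim\alpha x$''; monotonicity of $g$ is not actually needed there (and you do not verify it), since $g(x)\sim\alpha x$ already gives $g(x)=O(x)$ for large $x$, while for bounded $x$ the finitely many nonzero terms of $g$ are trivially bounded. This does not affect the argument.
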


Using a short and elementary argument, we give a negative answer to Question~\ref{ques}. In fact, we prove that for any real number $t\neq 0$ and any complex number $\lambda$, there exists a sequence $1 < a_1 < a_2 <\cdots$ of positive integers such that $\sum_k a_k^{-1} < \infty$ and $\sum_k a_k^{-1-it} = \lambda$. 

\begin{thm} \label{main}
    For any real number $t\neq 0$ and any complex number $\lambda$, there exists a subset $S\subseteq \mathbb{Z}^{\geq 2}$ such that
    \begin{align*}
        \sum_{n\in S}\frac{1}{n} &< \infty, \\
        \sum_{n\in S}\frac{1}{n^{1 + it}} &= \lambda. 
    \end{align*}
\end{thm}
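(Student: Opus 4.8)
The strategy is to isolate one ``approximation step'' and iterate it, realising $S$ as an increasing union of finite sets. Replacing $\lambda$ by $\overline{\lambda}$ if needed (which replaces $n^{-1-it}$ by $\overline{n^{-1-it}}=n^{-1+it}$) we may assume $t>0$; the hypothesis $t\neq 0$ is essential, since for $t=0$ the series $\sum_{n\in S}1/n$ is positive-real and so cannot equal an arbitrary $\lambda\in\mathbb{C}$. The step I would establish is: \emph{there is an absolute constant $C$ such that for every $\mu\in\mathbb{C}\setminus\{0\}$ and every $M_0\in\mathbb{Z}^{\geq 2}$ there is a finite $F\subseteq\mathbb{Z}\cap[M_0,\infty)$ with $\bigl|\sum_{n\in F}n^{-1-it}-\mu\bigr|\leq\tfrac12|\mu|$ and $\sum_{n\in F}1/n\leq C|\mu|$.} Granting this, set $\mu_1:=\lambda$, recursively choose $F_k$ for the target $\mu_k$ (stopping with a finite $S$ if ever $\mu_k=0$) using only integers exceeding $\max F_{k-1}$, and put $\mu_{k+1}:=\lambda-\sum_{j\leq k}\sum_{n\in F_j}n^{-1-it}$. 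Then $|\mu_{k+1}|\leq\tfrac12|\mu_k|$, so $|\mu_k|\leq 2^{1-k}|\lambda|$; the $F_k$ are pairwise disjoint, $S:=\bigcup_k F_k$ satisfies $\sum_{n\in S}1/n\leq C\sum_k|\mu_k|\leq 2C|\lambda|<\infty$, and because $\max F_k<\min F_{k+1}$ the partial sums of $\sum_{n\in S}n^{-1-it}$, ordered by $n$, stay within $|\mu_{k+1}|+\sum_{n\in F_{k+1}}1/n\to 0$ of $\lambda$ once $n\geq\max F_k$; hence $\sum_{n\in S}n^{-1-it}=\lambda$, which is Theorem~\ref{main}.

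To prove the step, fix a small $\epsilon>0$ (chosen in terms of $|\mu|$) and a large $N\geq M_0$ (chosen in terms of $\epsilon$ and $t$), and cut $[N,\infty)$ into blocks $B_j:=\mathbb{Z}\cap\bigl[\lceil N(1+\epsilon)^j\rceil,\lceil N(1+\epsilon)^{j+1}\rceil\bigr)$, $j\geq 0$. Comparing sums with the corresponding integrals of $x^{-1-it}$ gives, with $\beta:=t\ln(1+\epsilon)$ and $\delta_\epsilon:=(e^{-i\beta}-1)/(-it)$ (so $|\delta_\epsilon|\asymp\epsilon$ for $\epsilon$ small),
\[
\sum_{n\in B_j}n^{-1-it}=\delta_\epsilon\,N^{-it}e^{-ij\beta}+O_t(1/N),\qquad\sum_{n\in B_j}\frac1n=\ln(1+\epsilon)+O(1/N).
\]
Hence for a finite index set $T\subseteq\{0,1,\dots,M\}$, the set $F:=\bigcup_{j\in T}B_j$ satisfies $\sum_{n\in F}n^{-1-it}=\delta_\epsilon N^{-it}\sum_{j\in T}e^{-ij\beta}+O_t(|T|/N)$ and $\sum_{n\in F}1/n\leq|T|\bigl(\ln(1+\epsilon)+O(1/N)\bigr)$. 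So, writing $\Lambda:=\mu N^{it}/\delta_\epsilon$ (thus $|\Lambda|\asymp|\mu|/\epsilon$), it suffices to solve the planar problem: with $M$ as large as we please, choose $T\subseteq\{0,1,\dots,M\}$ with $|T|=O(|\Lambda|)$ such that $\sum_{j\in T}e^{-ij\beta}$ lies within absolute distance $2$ of $\Lambda$.

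I would solve this by a greedy walk. For $\epsilon$ small $\beta$ is small, so with $J:=\lceil 2\pi/\beta\rceil$ the angles $\{-j\beta\bmod 2\pi:\ mJ\leq j<(m+1)J\}$ have all gaps $\leq\beta$ on the circle, for every $m\geq 0$. Starting from running sum $\sigma=0$, process these index-blocks in turn: while $|\Lambda-\sigma|>2$, adjoin to $T$ the index $j$ of the current block for which $e^{-ij\beta}$ is closest in angle to $\Lambda-\sigma$ (angular error $\leq\beta/2$), and update $\sigma$. From the identity $|\Lambda-\sigma|^2\mapsto|\Lambda-\sigma|^2-2|\Lambda-\sigma|\cos\theta+1$ with $|\theta|\leq\beta/2$ and $\beta$ small, one checks that $|\Lambda-\sigma|$ decreases by at least $\tfrac12$ at each step while it exceeds $2$; thus $O(|\Lambda|)$ steps suffice, giving $|T|=O(|\Lambda|)$, and taking $M$ large enough to provide the $O(|\Lambda|)$ loops of length $J$ that are used is no constraint. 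Unwinding the substitutions: the greedy leaves error $\leq 2|\delta_\epsilon|\leq 2\epsilon$ at the original scale, which is $\leq|\mu|/4$ once $\epsilon\leq|\mu|/8$; the block error $O_t(|T|/N)=O_t(|\mu|/(\epsilon N))$ is $\leq|\mu|/4$ once $N$ is large enough in terms of $\epsilon$ and $t$; and $\sum_{n\in F}1/n\lesssim|T|\epsilon\lesssim|\Lambda|\epsilon\lesssim|\mu|$ with an absolute implied constant, which is the required bound.

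The main work is the one-step lemma, specifically balancing its three error terms — the block-to-integral error $O_t(|T|/N)$, the loss $|\delta_\epsilon|\asymp\epsilon$ coming from the fact that a block of width-factor $1+\epsilon$ can only point in a single near-direction, and the $O(1)$ slack of the greedy walk — against the cost $\sum_{n\in F}1/n$. The delicate point is that although $\epsilon$ must be taken small in terms of $|\mu|$ (which forces $N$ and $M$ large), the cost stays $\asymp|\mu|$, because it is $\asymp|T|\epsilon\asymp|\Lambda|\epsilon$ while $|\Lambda|\asymp|\mu|/\epsilon$; this scale-invariance of the cost is exactly what makes $\sum_k\sum_{n\in F_k}1/n$ converge in the iteration, and hence what makes the theorem true.
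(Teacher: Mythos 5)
Your argument is correct (the details you leave as ``one checks'' --- the half-unit decrease of the greedy distance when $\beta$ is small and the current distance exceeds $2$, the uniformity in $j$ of the $O_t(1/N)$ block errors, and the nonemptiness of the blocks once $N\gtrsim 1/\epsilon$ --- all go through), but it takes a genuinely different route from the paper. The paper's one-step lemma approximates a target $c$ by a \emph{single} run of consecutive integers $[x,x+\lfloor x|c|\rfloor)$, with the starting point $x$ chosen so that $x^{-it}$ is parallel to $c$; since the direction of $n^{-1-it}$ drifts only by $O_t(|c|)$ across such a run, this gives error $O_t(|c|^2)$ at cost at most $|c|$, and the iteration then caps each step at a fixed radius $r=(2+2|1+it|)^{-1}$ so that the quadratic error becomes a genuine contraction. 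You instead fix a multiplicative dissection into blocks of ratio $1+\epsilon$, each contributing a vector of modulus $\asymp\epsilon$ that rotates by the angle $\beta=t\ln(1+\epsilon)$ from block to block, and run a greedy walk over these rotating directions to hit an arbitrary target $\mu$ within $|\mu|/2$ at cost $O(|\mu|)$. What your version buys is a one-step lemma with a fixed contraction factor valid for all $|\mu|$ simultaneously (no capping step, and a clean bound $\sum_{n\in S}1/n\leq 2C|\lambda|$); what it costs is the three-way error balancing ($\epsilon$ small in terms of $|\mu|$, $N$ large in terms of $\epsilon$ and $t$, plus the $O(1)$ greedy slack) and the auxiliary planar subset-selection lemma, all of which the paper avoids by aligning the direction through the choice of the starting point $x$ rather than by selecting among rotating block directions. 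Both arguments exploit $t\neq 0$ in the same essential way, namely that $x^{-it}$ runs through all directions.
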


It shall follow from our proof of Theorem~\ref{main} that we may in fact demand, for any positive integer $N$ and any real number $\delta > 0$, that $S$ is infinite, $S\subseteq \mathbb{Z}^{\geq N}$ and $\sum_{n\in S}\frac{1}{n} \leq |\lambda| + \delta$. 

\section{Proof}

To show Theorem~\ref{main}, we shall build the desired set $S$ iteratively. In each step, we add a finite number of elements to $S$ to bring $\sum_{n\in S}\frac{1}{n^{1 + it}}$ substantially closer to the target value $\lambda$ without excessively increasing $\sum_{n\in S}\frac{1}{n}$. 

The key to carrying out this strategy is to note that the function $\frac{1}{n^{1 + it}}$ is sufficiently slowly varying. In particular, for small $\epsilon$, the sum $\sum_{n = x}^{(1 + \epsilon)x}\frac{1}{n^{1 + it}}$ has modulus $\Theta(\epsilon)$ and argument $\arg (x) + O(\epsilon)$. We formally capture this observation with the following lemma. 

\begin{lem} \label{lem}
    For any positive integer $N$ and any complex number $c$, there exists a finite set $S'\subseteq \mathbb{Z}^{\geq N}$, such that 
    \begin{align*}
        \left|c - \sum_{n\in S'}\frac{1}{n^{1 + it}}\right|&\leq O\left(|c|^2\right), \\
        \sum_{n\in S'}\frac{1}{n}&\leq |c|, 
    \end{align*}
    where the implied constant (which may be taken to be $1 + |1 + it|$) depends only on $t$. 
\end{lem}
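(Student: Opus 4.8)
The plan is to realize $c$ as a sum of terms $n^{-1-it}$ over a short dyadic-type block, exploiting the fact that on a window $[x, (1+\epsilon)x]$ the summand $n^{-1-it} = n^{-1}e^{-it\log n}$ has essentially constant argument $-t\log x$, while $\sum_{n=x}^{(1+\epsilon)x} n^{-1} \approx \epsilon$. So first I would write $c = r e^{i\theta}$ with $r = |c|$, and aim to find a window whose ``phase'' $-t\log x$ matches $\theta$ modulo $2\pi$; since $t \neq 0$, the map $x \mapsto -t\log x \pmod{2\pi}$ is surjective as $x$ ranges over $[N, \infty)$, so I can pick $x_0 \geq N$ with $-t\log x_0 \equiv \theta \pmod{2\pi}$. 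Then I would take $S'$ to be the block of integers $n$ with $x_0 \le n < y_0$, where $y_0$ is chosen so that $\sum_{n \in S'} n^{-1}$ is as close to $r$ as possible; by monotonicity and the fact that each term is at most $1/x_0 \le 1/N$, I can hit $r$ within an error of $1/x_0$, and in particular arrange $\sum_{n\in S'} n^{-1} \le r = |c|$, which is the second required bound (possibly after shrinking the block by one element).

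The main estimate is then to control $|c - \sum_{n\in S'} n^{-1-it}|$. Write $\sum_{n\in S'} n^{-1-it} = \sum_{n\in S'} n^{-1} e^{-it\log n}$. For $n$ in the block, $|{-it\log n} - (-it\log x_0)| = |t|\,|\log(n/x_0)| \le |t|\log(y_0/x_0)$, and since $\sum_{n\in S'} n^{-1} \le |c|$ the ratio $y_0/x_0$ satisfies $\log(y_0/x_0) \le \sum_{n\in S'} n^{-1}$ (comparison with the integral, roughly), hence is $\le |c|$ up to a constant. Therefore $e^{-it\log n} = e^{-it\log x_0}(1 + O(|t|\,|c|))$ uniformly over the block, giving
\begin{align*}
    \sum_{n\in S'}\frac{1}{n^{1+it}} &= e^{-it\log x_0}\sum_{n\in S'}\frac{1}{n}\,\bigl(1 + O(|t|\,|c|)\bigr) \\
    &= e^{i\theta}\Bigl(\sum_{n\in S'}\frac1n\Bigr) + O\bigl(|t|\,|c|^2\bigr),
\end{align*}
using $e^{-it\log x_0} = e^{i\theta}$ and $\sum n^{-1} \le |c|$. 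Combining with $|r - \sum_{n\in S'} n^{-1}| \le 1/x_0 \le 1/N$ — here I need the window to live at scale $x_0 \gtrsim 1/|c|$, which I can force by replacing $x_0$ by $x_0 e^{2\pi k/|t|}$ for suitable integer $k$ without changing the phase — the discrepancy $|c - e^{i\theta}\sum n^{-1}| = |r - \sum n^{-1}|$ is $\le 1/x_0 = O(|c|)$; more carefully, choosing $x_0 \ge 1/|c|$ makes this term $O(|c|^2)$ as well. Summing the two contributions yields $|c - \sum_{n\in S'} n^{-1-it}| = O(|c|^2)$ with implied constant depending only on $t$ (tracking the constants, $1 + |1+it|$ suffices).

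The step I expect to be the main obstacle is the bookkeeping that simultaneously pins down the phase ($-t\log x_0 \equiv \theta$), keeps the block at a large enough scale ($x_0 \gtrsim \max(N, 1/|c|)$), and still allows $\sum_{n\in S'} n^{-1}$ to be tuned to within $O(|c|^2)$ of $|c|$ — these three demands interact, and one must check that the freedom to translate $x_0$ by multiples of $e^{2\pi/|t|}$ (which preserves the phase) gives enough room. A secondary subtlety is that when $|c|$ is not small the statement is essentially vacuous (the bound $O(|c|^2)$ is weaker than just taking $S' = \emptyset$ and bounding by $|c|$), so I would only need the construction to be meaningful for $|c| \le c_0(t)$ small, and can define $S' = \emptyset$ otherwise; this lets me assume $|c|$ small throughout the estimates above, so that all the $O(\cdot)$ error terms genuinely absorb into $O(|c|^2)$.
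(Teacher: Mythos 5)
Your argument is essentially the paper's proof: the paper likewise chooses $x\ge N$ with $x^{-it}$ parallel to $c$, takes the block $S'=[x,x+\lfloor x|c|\rfloor)\cap\mathbb{Z}$, and splits the error into the variation of $n^{-1-it}$ across the block (via the mean value theorem, giving $|1+it|\,|c|^2$) plus the rounding error $\left|s/x-|c|\right|\le 1/x$, having assumed $N\ge\max(|c|^{-1},|c|^{-2})$. The one slip is your claim that taking $x_0\ge 1/|c|$ makes the rounding term $1/x_0$ of size $O(|c|^2)$ --- you need $x_0\ge |c|^{-2}$ for that --- but your phase-preserving shift of $x_0$ by factors of $e^{2\pi k/|t|}$ already lets you impose this, so the fix is immediate and the approach is the same.
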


\begin{proof}
    Taking $S' = \emptyset$ suffices for $c = 0$, therefore we assume that $c\neq 0$. Without loss of generality, we may assume that $N\geq \max(|c|^{-2}, |c|^{-1})$. Take $x\geq N$ such that $x^{-it}$ is parallel to $c$. Let $s = \left\lfloor x|c|\right\rfloor\geq 1$. Let $S' = [x, x + s)\cap \mathbb{Z}\subseteq \mathbb{Z}^{\geq N}$ be a set of $s$ integers. We have
    \begin{equation*}
        \sum_{n\in S'}\frac{1}{n}\leq \frac{|S'|}{x} = \frac{s}{x}\leq |c|. 
    \end{equation*}
    By the mean value theorem, we have
    \begin{align*}
        \left|\sum_{n\in S'}\frac{1}{n^{1 + it}} - \frac{|S'|}{x^{1 + it}}\right|\leq |S'|\max_{n\in S'} \left|\frac{1}{n^{1 + it}} - \frac{1}{x^{1 + it}}\right|&\leq |S'|\max_{n\in S'}|n - x|\sup_{y\in [x, x + s]}\left|\frac{1 + it}{y^{2 + it}}\right|\\
        &\leq s^2\frac{|1 + it|}{x^2}\leq |1 + it||c|^2. 
    \end{align*}
    Since $\frac{|S'|}{x^{1 + it}}$ is parallel to $c$, we have
    \begin{equation*}
        \left|\frac{|S'|}{x^{1 + it}} - c\right| = \left|\left|\frac{|S'|}{x^{1 + it}}\right| - |c|\right| = \left|\frac{s}{x} - |c|\right|\leq \frac{1}{x} \leq \frac{1}{N} \leq |c|^2. 
    \end{equation*}
    Therefore, 
    \begin{equation*}
        \left|c - \sum_{n\in S'}\frac{1}{n^{1 + it}}\right|\leq (1 + |1 + it|)|c|^2. 
    \end{equation*}
    as desired. 
\end{proof}

\begin{proof}[Proof of Theorem~\ref{main}]
    We construct $S$ as a union of disjoint finite sets $S_k\subseteq \mathbb{Z}^{\geq 2}$ for $k\in \mathbb{Z}^+$. We construct $S_k$ recursively. Let $r = (2 + 2|1 + it|)^{-1}$. 

    For $k\in \mathbb{Z}^+$, given $S_1, \dots, S_{k - 1}$, take $N_k\geq 2$ greater than all elements of $S_1, \dots,S_{k - 1}$. Let $\lambda_k = \lambda-\sum_{n\in S_1\cup\dots\cup S_{k - 1}}\frac{1}{n^{1 + it}}$. Let $c_k = \lambda_k$ if $|\lambda_k|\leq r$, otherwise, let $c_k$ be parallel to $\lambda_k$ with norm $r$. We construct $S_{k + 1}$ as $S'$ from Lemma~\ref{lem} with $N = N_k$ and $c = c_k$. 

    The finite sets $S_k$ of integers thus obtained are disjoint as any element of $S_k$ is greater than any element of $S_1, \dots, S_{k - 1}$. We take $S = \cup_k S_k$. 

    By Lemma~\ref{lem}, 
    \begin{equation*}
        \left|c_k - \sum_{n\in S_k}\frac{1}{n^{1 + it}}\right|\leq (1 + |1 + it|)|c_k|^2 = \frac{|c_k|^2}{2r}\leq \frac{|c_k|}{2}. 
    \end{equation*}
    Therefore, noting that $|\lambda_k - c_k| = |\lambda_k| - |c_k|$, we have
    \begin{equation*}
        |\lambda_{k + 1}| = \left|\lambda_k - \sum_{n\in S_k}\frac{1}{n^{1 + it}}\right|\leq |\lambda_k - c_k| + \left|c_k - \sum_{n\in S_k}\frac{1}{n^{1 + it}}\right|\leq |\lambda_k| - |c_k|/2. 
    \end{equation*}
    Hence $|\lambda_k|$ decreases as $k$ increases. If $|\lambda_k| > r$, we have
    \begin{equation*}
        |\lambda_{k + 1}| \leq |\lambda_k| - r/2. 
    \end{equation*}
    Therefore, $|\lambda_k|\leq r$ for all $k\geq 2|\lambda|/r$. And for $|\lambda_k|\leq r$, we have
    \begin{equation*}
        |\lambda_{k + 1}| \leq |\lambda_k|/2. 
    \end{equation*}
    Therefore, $\lambda_k\rightarrow 0$ exponentially. By Lemma~\ref{lem}, $\sum_{n\in S_k}\frac{1}{n}\leq |c_k|$. Therefore, 
    \begin{equation*}
        \sum_{n\in S} \frac{1}{n} = \sum_k\sum_{n\in S_k}\frac{1}{n}\leq \sum_k|c_k|\leq \sum_k|r_k| < \infty, 
    \end{equation*}
    and the series $\sum_{n\in S} \frac{1}{n^{1 + it}}$ is absolutely convergent. As such, to show that the series $\sum_{n\in S} \frac{1}{n^{1 + it}}$ converges to $\lambda$, it suffices to consider the subsequence
    \begin{equation*}
        \sum_{n\in S_1\cup \dots \cup S_k}\frac{1}{n^{1 + it}} = \lambda - \lambda_{k + 1}
    \end{equation*}
    of partial sums, which indeed tends to $\lambda$ as $k\rightarrow\infty$. 
\end{proof}

\section{Concluding Remarks}

The elementary approach we consider here is inapplicable to the case where $a_k$ is required to be a finite sequence, where numerous number-theoretical subtleties arise. 

\begin{conj}
    For any finite set $S\subseteq \mathbb{Z}^{\geq 2}$ and any real number $t$, 
    \begin{equation*}
        1 + \sum_{n\in S}\frac{1}{n^{1 + it}}\neq 0. 
    \end{equation*}
\end{conj}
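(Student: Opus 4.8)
Write $D_S(s)=1+\sum_{n\in S}n^{-s}$; the claim is that $D_S(1+it)\neq 0$ for every finite $S\subseteq\mathbb{Z}^{\geq 2}$ and every real $t$. One half is immediate: if $\sum_{n\in S}\frac1n<1$ then $|D_S(1+it)|\geq 1-\sum_{n\in S}\frac1n>0$ by the triangle inequality, so any potential counterexample must have $\sum_{n\in S}\frac1n\geq 1$ (hence $|S|$ fairly large) and $t\neq 0$ (since $D_S(1)=1+\sum_{n\in S}\frac1n>0$). My plan is to recast the problem multiplicatively and attack the nontrivial regime by induction on the primes involved. Let $p_1<\cdots<p_m$ be the primes dividing some element of $S$ and put $w_j=p_j^{-it}$. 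Since $n^{-it}=\prod_j(p_j^{-it})^{v_{p_j}(n)}$, we have $D_S(1+it)=Q_S(w_1,\dots,w_m)$, where
\[
Q_S(w_1,\dots,w_m):=1+\sum_{n\in S}\frac1n\prod_{j=1}^m w_j^{\,v_{p_j}(n)}
\]
is a polynomial in $m$ variables with strictly positive coefficients, constant term $1$, distinct monomials (as $n\mapsto(v_{p_j}(n))_j$ is injective on $S$), and coefficient sum $1+\sum_{n\in S}\frac1n$.

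It would suffice to prove the stronger, $t$-free statement that $Q_S$ has no zero on the torus $\mathbb{T}^m=\{w\in\mathbb{C}^m:|w_1|=\cdots=|w_m|=1\}$ --- and this would give the non-vanishing for all $t$ at once. I would first try to establish this. As a sanity check it holds for structured $S$: when $S$ is the set of divisors $>1$ of an integer $M$, then $Q_S$ factors as $\prod_{p^{a}\| M}\bigl(1+p^{-1}w_p+\cdots+p^{-a}w_p^a\bigr)=\prod_{p^{a}\| M}\frac{1-(w_p/p)^{a+1}}{1-w_p/p}$, each factor zero-free on $|w_p|\leq 1$; and for $m=1$ the triangle-inequality case already applies, since $\sum_{n\in S}\frac1n\leq\sum_{j\geq1}p_1^{-j}=\frac1{p_1-1}\leq1$ with equality impossible for finite $S$. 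Already $m=2$ --- say $S$ a finite set of $\{2,3\}$-smooth numbers, where $\sum_{n\in S}\frac1n$ can approach $2$ --- looks genuinely nontrivial and would be the natural first target, perhaps via a hands-on analysis on $\mathbb{T}^2$ combined with the fact that the tail of the $\{2,3\}$-smooth harmonic sum is finite.

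For the inductive step on general $S$ the plan is to peel off the largest prime $p=p_m$: grouping $S$ by the exponent of $p$, write $Q_S(w)=\sum_{j\geq0}w_m^{\,j}R_j(w_1,\dots,w_{m-1})$, where $R_0=Q_{S_0}$ for $S_0=\{n\in S:p\nmid n\}$ (a set involving only $p_1,\dots,p_{m-1}$, so covered by the induction hypothesis), and for $j\geq1$ the coefficients of $R_j$ are $\frac1n=\frac1{p^{j}m}$ over the distinct integers $m=n/p^{j}\leq\max S$, whence $\|R_j\|_1\leq p^{-j}(1+\log\max S)$ and $\sum_{j\geq1}\|R_j\|_1\leq\frac{1+\log\max S}{p-1}$. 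One would then try to close the induction via $|Q_S(w)|\geq\min_{\mathbb{T}^{m-1}}|R_0|-\sum_{j\geq1}\|R_j\|_1$, which requires the induction hypothesis to be stated \emph{quantitatively}: a lower bound $\min_{\mathbb{T}^k}|Q_T|\geq\varphi(\max T)$ for some slowly decaying $\varphi$ that survives the accumulated error.

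The hard part is exactly making such a quantitative hypothesis propagate. The bound above is hopelessly lossy when $p$ is small and $\max S$ large (e.g.\ $S$ smooth), and more fundamentally there is no apparent reason that $\min_{\mathbb{T}^{m-1}}|R_0|$ should beat $\sum_{j\geq1}\|R_j\|_1$: $R_0$ may itself be close to a polynomial with a torus zero coming from a different ``subregime'' of $S$, while the error need not be small, and reorganizing (peeling instead the prime, or exponent, of least $\ell^1$-mass, or first multiplying by a cyclotomic factor to exploit near-geometric exponent patterns) does not obviously fix this. Moreover, any correct argument must be sharp: partial sums $\sum_{n\leq N}n^{-s}$ of the zeta function are known (for large $N$) to have zeros in the half-plane $\operatorname{Re}(s)>1$, so the closed half-plane $\operatorname{Re}\geq1$ is strictly larger than the zero-free region and no soft perturbation of the $\sum\frac1n<1$ case can succeed. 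Finally, should the torus statement turn out to be false, one is thrown back onto the single arithmetic point $w=(p_1^{-it},\dots,p_m^{-it})$, whose ``genericity'' would have to be extracted from the $\mathbb{Q}$-linear independence of $\log p_1,\dots,\log p_m$ together with finer Diophantine information about $t$ --- the ``number-theoretical subtleties'' anticipated above. I do not at present see how to carry the elementary method of this paper through either route, and I expect the resolution to require a genuinely new idea.
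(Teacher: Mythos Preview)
The statement you are attempting is labeled a \emph{Conjecture} in the paper, not a theorem; the paper offers no proof and explicitly says that ``the elementary approach we consider here is inapplicable to the case where $a_k$ is required to be a finite sequence, where numerous number-theoretical subtleties arise.'' So there is no paper proof to compare against.

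Your proposal is, correspondingly, not a proof either --- and to your credit you say so plainly at the end. What you have written is a thoughtful reconnaissance: the reformulation $D_S(1+it)=Q_S(w_1,\dots,w_m)$ with $w_j=p_j^{-it}$ is natural, the easy cases ($\sum_{n\in S}1/n<1$, $t=0$, $m=1$, divisor sets) are handled correctly, and you identify precisely where the inductive/peeling strategy stalls, namely that a quantitative lower bound $\min_{\mathbb{T}^{m-1}}|R_0|$ need not dominate $\sum_{j\ge 1}\|R_j\|_1$. Your remark that partial sums of $\zeta$ have zeros with $\operatorname{Re}(s)>1$ is a genuine obstruction to any soft argument and aligns with the paper's warning about number-theoretic subtleties.

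In short: there is no gap to name because you do not claim a proof, and there is no comparison to make because the paper does not prove this statement. Your write-up would be appropriate as a discussion of why the conjecture resists the paper's methods, but it should not be presented as a proof proposal.
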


In particular, the special case $S = \{2, 3, 5\}$ considered by Erd\H{o}s and Ingham remains open. 

\begin{ques}[Erd\H{o}s and Ingham~\cite{erdos1964arithmetical}]
    Is it true that, for every $t\in \mathbb{R}$,
    \begin{equation*}
        1 + 2^{-1 - it} + 3^{-1 - it} + 5^{-1 - it}\neq 0?
    \end{equation*}
\end{ques}

\textbf{Acknowledgement.} The author would like to thank Timothy Gowers for helpful suggestions and advice on the presentation of this note. 

\bibliographystyle{abbrv}
\bibliography{mybib}

\end{document}